\theoremstyle{plain}
\newtheorem{theorem}{Theorem}[section]
\newtheorem{lemma}[theorem]{Lemma}
\newtheorem{proposition}[theorem]{Proposition}
\newtheorem{corollary}[theorem]{Corollary}
\theoremstyle{definition}
\newtheorem{definition}[theorem]{Definition}
\newtheorem{example}[theorem]{Example}
\newtheorem{algorithm}[theorem]{Algorithm}
\theoremstyle{remark}
\newtheorem{remark}[theorem]{Remark}
\newcommand{\scst}{\scriptscriptstyle}
\title{On permanents of  Sylvester Hadamard matrices}
\author{
Jos\'e Andr\'es Armario
}
\date{}
\address{\small \rm  Depto Matemática Aplicada I\\
 Universidad de Sevilla\\ Avda. Reina Mercedes s/n 41012 Sevilla\\ Spain.}
\email{ armario@us.es}
\begin{document}

\begin{abstract}
It is well-known that the evaluation of the permanent of an arbitrary $(-1,1)$-matrix is a formidable problem. Ryser's formula is one of the fastest known general algorithms for computing permanents. In this paper, Ryser's formula has been rewritten for the special case of Sylvester Hadamard matrices by using  their cocyclic construction. The rewritten formula presents an important reduction in the number of sets of $r$ distinct rows of the matrix to be considered. However, the algorithm needs a preprocessing part which remains  time-consuming in general.
\end{abstract}

\maketitle

\section{Introduction}

A {\it Hadamard matrix} $H$ of order $n$ is an $n\times n$ matrix with elements $\pm 1$ and $H H^T=nI$. A Hadamard matrix is said to be normalized if it has its first row and column all $1's$. We can always normalize a Hadamard matrix by multiplying rows and columns by $-1$.
 It is well-known that  $n$   is either $2$ or a  multiple of 4 and
it is conjectured that Hadamard matrices exist for every $n \equiv 0 \mod 4$ (see \cite{Hor07}).

 Sylvester in 1867 noted that given a Hadamard matrix $H$ of order $n$, then $$\left[\begin{array}{cr} H & H \\ H & -H\end{array}\right]$$ is a Hadamard matrix of order $2n$. Matrices of this form are called {\em Sylvester Hadamard} and defined for all powers of 2. Below is given the Sylvester Hadamard matrix of order 2
$$H_2=\left[\begin{array}{cr} 1 &  1\\ 1 & -1\end{array}\right].$$
Starting with $H_2$, Sylvester Hadamard matrices of order $2^k$ can be formed by $\stackrel{k-copies}{H_2\times\cdots\times H_2}$ the Kronecker product of $k$ copies of $H_2$ and are denoted   $H_{2^{k}}$.

Two Hadamard matrices $H$ and $H'$ are called {\em equivalent} (or Hadamard equivalent, or $H$-equivalent) if one can be obtained from the other by a sequence of row and/or column interchanges and row and/or column negations. The  question of classifying  Hadamard matrices of order $n> 32$ remains unanswered and only partial results are known. 

We recall that the original interest in Hadamard matrices stemmed from the fact that a Hadamard matrix $H=[h_{ij}]$ of order $n$ satisfies equality in Hadamard's inequality

$$\left(\mbox{det}\, H\right)^2\leq \prod_{j=1}\sum_{i=1}|h_{ij}|^2$$
for entries in the unit circle.

The permanent is a matrix function introduced (independently) by Cauchy and Binet in 1812.

\begin{definition}
Let $N$ be the set $\{1,\ldots,n\}$, ($n\in {\bf Z}^{+}$). The symmetric group $S_n$ is the group of all $n!$ permutations of $N$.
The {\em permanent} of an $n\times n$ matrix $A=\left[a_{ij}\right]$  is defined by
$$\mbox{per}\,(A)=\sum_{\sigma\in S_n}\prod_{i=1}^{n} a_{i,\sigma(i)}.$$
\end{definition}

At first sight
it seems to be a simplified version of the determinant, but this impression is misleading.
For instance, the determinant of an arbitrary matrix can be evaluated efficiently using Gaussian elimination, however the computation of the permanent is much more complicated. Valiant \cite{Val79} proved that it belongs to the class of $\sharp$P-complete problems, which basically means that there is almost no possibility of finding a polynomial time deterministic algorithm for computing the permanent in general.

Hadamard matrices of order $n$ are characterized by attaining the maximal possible absolute value of the determinant, $n^{n/2}$, among all
square matrices of order $n$ with entries from the unit disc,
but few things are known on their permanents, aside from $| \mbox{per}\,(H)| \leq |\mbox{det}\,(H)| = n^{n/2}$. Moreover, the permanent of a Hadamard matrix has hardly been worked on.
For what we know,  the permanents for all Hadamard matrices of orders up to and including 28 were calculated in \cite{Wan05}, but for orders greater than 28 the permanents remains unknown.

In 1974 Wang \cite{Wan74} posed the following question:

\bigskip
\noindent{\bf Problem 1} $\quad$
 Can the permanent of  a Hadamard matrix $H$ of order $n$ vanish for $n>2$?
\bigskip

This problem, which is listed as problem $\sharp$5 in Minc's catalogue of open problems \cite{Min78} has in any case been answered in the negative for $n<32$ by
Wanless \cite{Wan05}. Moreover,
let $f(n)=\sum_{k\geq 1}\lfloor2^{-k}n\rfloor$ be the greatest exponent $e$ such that $2^e$ divides $n!$. Wanless
 conjectured that $2^{f(n)}$
divides $per (H)$ and  $2^{f (n)+1}$ does not.

The Sylvester matrices have revealed to be the most treatable class of Hadamard matrices.
For instance, the eigenvalues, the eigenvectors and  the growth factor of  Hadamard matrices are unknown in general but they are well-known for Sylvester Hadamard matrices \cite{YH82,DP88}.

In the following result, we provide a sufficient condition to give a negative answer to Problem 1 for Sylvester Hadamard matrices.

\begin{proposition}\label{condicionsuficiente}
If
\begin{equation}\label{demsh}
\mbox{per}\,((H_{2^n}\times I_{2^m})(I_{2^n}\times H_{2^m}))\geq \mbox{per}\,(H_{2^n}\times I_{2^m})\,\mbox{per}\,(I_{2^n}\times H_{2^m})
\end{equation}
for $n\geq 2$ and $m\geq 2$, then the permanent of a Sylvester Hadamard matrix of order $2^{k}$ never vanishes  for $k>2$.
\end{proposition}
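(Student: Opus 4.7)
The plan is to combine the Kronecker factorization of the Sylvester matrices with an induction on $k$. First, the mixed-product law $(A\times B)(C\times D)=(AC)\times(BD)$, applied with $A=H_{2^n}$, $B=I_{2^m}$, $C=I_{2^n}$, $D=H_{2^m}$, gives
\[
(H_{2^n}\times I_{2^m})(I_{2^n}\times H_{2^m})=H_{2^n}\times H_{2^m}=H_{2^{n+m}},
\]
so the left-hand side of (\ref{demsh}) is exactly $\mbox{per}\,(H_{2^{n+m}})$.

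Next I would simplify the two factors on the right of (\ref{demsh}). Since $I_{2^n}\times H_{2^m}$ is block-diagonal with $2^n$ copies of $H_{2^m}$ on the diagonal,
\[
\mbox{per}\,(I_{2^n}\times H_{2^m})=\mbox{per}\,(H_{2^m})^{2^n}.
\]
The matrix $H_{2^n}\times I_{2^m}$ is obtained from a block-diagonal matrix with $2^m$ copies of $H_{2^n}$ by permuting rows and columns, and the permanent is invariant under row and column permutations, so
\[
\mbox{per}\,(H_{2^n}\times I_{2^m})=\mbox{per}\,(H_{2^n})^{2^m}.
\]
Assuming (\ref{demsh}) holds, these identities combine to give
\[
\mbox{per}\,(H_{2^{n+m}})\;\geq\;\mbox{per}\,(H_{2^n})^{2^m}\,\mbox{per}\,(H_{2^m})^{2^n}\qquad(n,m\geq 2).
\]

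Since $2^n$ and $2^m$ are even positive integers for $n,m\geq 2$, each factor on the right is non-negative, and the product is strictly positive as soon as both $\mbox{per}\,(H_{2^n})$ and $\mbox{per}\,(H_{2^m})$ are nonzero. I would then conclude by induction on $k$. The two base cases are $k=2$, where a direct expansion yields $\mbox{per}\,(H_4)=8\neq 0$, and $k=3$, where $\mbox{per}\,(H_8)\neq 0$ from the tables of \cite{Wan05}; note that $k=3$ requires a separate verification because $3=n+m$ has no solution with $n,m\geq 2$. For $k\geq 4$, choosing $n=2$ and $m=k-2\geq 2$, the inductive hypothesis gives $\mbox{per}\,(H_{2^{k-2}})\neq 0$, and together with $\mbox{per}\,(H_4)\neq 0$ the displayed inequality forces $\mbox{per}\,(H_{2^k})>0$.

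The Kronecker-product manipulations are routine; the honest content of the proposition is that the hypothesis (\ref{demsh}) is \emph{assumed}, not proved, and that is where one would expect the true difficulty to lie in any complete attack on Problem 1. Within the plan itself, the only delicate bits of bookkeeping are (i) noticing that the restriction $n,m\geq 2$ in (\ref{demsh}) exactly matches what is needed to avoid the degeneracy $\mbox{per}\,(H_2)=0$, which would otherwise collapse the right-hand side, and (ii) treating $k=3$ as an independent base case since it cannot be written as $n+m$ with $n,m\geq 2$.
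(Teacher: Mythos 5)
Your proposal is correct and follows essentially the same route as the paper: factor $H_{2^{n+m}}=(H_{2^n}\times I_{2^m})(I_{2^n}\times H_{2^m})$, use the standard permanent identities for Kronecker products to turn the hypothesis into $\mbox{per}\,(H_{2^{n+m}})\geq(\mbox{per}\,(H_{2^n}))^{2^m}(\mbox{per}\,(H_{2^m}))^{2^n}$, and then induct from the base values $\mbox{per}\,(H_4)=8$ and $\mbox{per}\,(H_8)=384$. Your explicit remark that $k=3$ needs its own base case (since $3$ is not a sum of two integers $\geq 2$) is exactly why the paper records both of those values, so the argument matches in substance as well as in outline.
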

\begin{proof}
It is well-known that
$$H_{2^{n+m}}=\stackrel{\tiny {2^{n+m}-copies}}{H_2\times\cdots\times H_2}$$
(using the associative law for the Kronecker product)
$$\quad\,\,\,\,\,=H_{2^n}\times H_{2^m}$$
(taking into account that $A\times B=(A\times I_m)(I_n\times B)$ for $A$ and $B$ squares matrices of order $n$ and $m$, respectively)
$$\qquad\qquad\qquad\qquad=(H_{2^n}\times I_{2^m})(I_{2^n}\times H_{2^m}).$$
Let us recall the following basic properties for permanents \cite{Bru66},
\begin{enumerate}
\item $\mbox{per}\,(I_n\times A)=(\mbox{per}\,(A))^n$.
\item $\mbox{per}\,(A\times B)= \mbox{per}\,(B\times A)$.
\end{enumerate}
Therefore, if the inequality (\ref{demsh}) occurs then
$$ \mbox{per}\, (H_{2^{n+m}})\geq (\mbox{per}\,(H_{2^n}))^{2^m} (\mbox{per}\,(H_{2^m}))^{2^n},\quad \mbox{for}\,\, n\geq 2\,\, \mbox{and}\,\, m\geq 2.$$
Taking into consideration the inequality above and  that $\mbox{per}\,(H_4)=8$ and $\mbox{per}\,(H_8)=384$, it follows the desired result.

\end{proof}

The following open problem arises naturally from Proposition \ref{condicionsuficiente}.

\bigskip
\noindent{\bf Problem 2} $\quad$
 For each $n\geq 2$ and $m\geq 2$ is the inequality (\ref{demsh})  always true?
\bigskip

It is well-known that  it is not in general true that $$\mbox{per}\, (AB)=\mbox{per}\,(A) \,\mbox{per}\,(B)$$ for $A$ and $B$ square matrices. This is a devastating blow, since most of the nice properties of
determinants follow from $\mbox{det}\,(AB) = \mbox{det}\,(A) \mbox{det}\,(B)$. In particular, we cannot use Gaussian elimination to calculate
permanents.
For the special case when the matrices are nonnegative, Brualdi \cite{Bru66} proved that
$$\mbox{per}\, (AB)\geq \mbox{per}\,(A) \,\mbox{per}\,(B).$$ On the other hand, Problem 2 is conceivably much more difficult.

In the early 90s, a surprising link between homological algebra and Hadamard matrices \cite{HD94} led to the study of cocyclic Hadamard matrices. Hadamard matrices of many classes are revealed to be (equivalent to) cocyclic matrices \cite{Hor07}. Among them are Sylvester Hadamard matrices, Williamson-type Hadamard matrices and Paley Hadamard matrices.

Let
 $G=\{g_1=1,\,g_2,\ldots,g_{4t}\}$ be a multiplicative group, not necessarily abelian. Functions
$\psi\colon G\times G\rightarrow \langle -1\rangle\cong {\bf Z}_2$ which satisfy
$$\psi(g_i,g_j)\psi(g_ig_j,g_k)=\psi(g_j,g_k)\psi(g_i,g_jg_k), \quad\forall g_i,g_j,g_k\in G$$
are
called {\it (binary) cocycles (over $G$)}\cite{McL95}.

A cocycle $\psi$ is naturally
displayed as {\it a cocyclic matrix} $M_\psi$;
 that is, the entry in the $(i,j)$th position of the cocyclic matrix is $\psi(g_i,g_j)$, for all $1\leq i,j\leq 4t$. i.e.,
 $$M_\psi=\left[\psi(g_i,g_j)\right]_{g_i,g_j\in G},$$
 where rows and columns of $M_\psi$ are indexed by the elements of $G$.
\begin{example}\label{cocyclesh}
Let $G={\bf Z}_2^k$. The vector inner product $\langle g_i,g_j\rangle$ determines a cocycle $\psi$, where $\psi(g_i,g_j)=(-1)^{\langle g_i,g_j\rangle}$ for all $g_i,g_j \in G$, and $M_\psi$ is the Sylvester Hadamard matrix of order $2^k$.
\end{example}

The  main purpose of this paper is to rewrite Ryser's formula to evaluate the permanent of Sylvester Hadamard matrices using their cocyclic properties which may lead to improvements. In \cite{Arm10}, an analogous approach has been done for computing the profile of cocyclic Hadamard matrices. As an example, we will focus on $H_8$.

\bigskip

\noindent{\bf Notation.} Throughout this paper we use $-$ for $-1$ and $1$ for $+1$. We write $H_{2^p}$ for a Sylvester Hadamard matrix of order $2^p$. The  cardinality of a set $S$ is denoted $\sharp S$. The notation $(0,1)$-matrix  means a matrix whose entries are either $0$ or  $1$. We use $I_n$ for the identity matrix of order $n$ and $M^T$ for the transpose of $M$.

\section{Ryser's formula for  Sylvester Hadamard matrices}

H.J. Ryser found an alternative method to evaluate the permanent of a matrix $A$ of order $n$, which is one of the fastest known general algorithms for computing permanents. By counting multiplications it has an efficiency of $O(2^nn)$. See \cite{Wan07} for some of the theories of permanents. See \cite{Hor07} for some elementary notions from cocyclic Hadamard matrices that we use here.

 If $A=[a_{ij}]$ is any $n\times n$ matrix
\begin{equation}\label{ryser'sf}
\mbox{per}\,(A)=\sum_{r=1}^n(-1)^{r}\sum_{\alpha\in Q_{r,n}}\prod_{j=1}^n \sum_{i\in \alpha} a_{i,j},
\end{equation}
where $Q_{r,n}$ denotes the set of all strictly increasing sequences of $r$ integers chosen from the set $\{1,2,\ldots, n\}$.

Here are three ways to calculate the permanent of  $$H_4=\left[\begin{array}{cccc}
1 & 1 & 1 & 1\\
1 & - & 1 & -\\
1 & 1 & - & -\\
1 & - & - & 1
\end{array}
\right].$$

The Classical formula, using all the permutations $S_4$, needs $23$ additions and $72$ multiplications.
Ryser's method needs  51 multiplications and 101 additions. Finally,
Algorithm \ref{algorithm2}, described below, needs only 9 multiplications and 4 additions.
The last two methods are faster than the first one for larger matrices.

\begin{definition}\label{pequivalence}
Let $M_1$ and $M_2$ be two $(0,1)$-matrices. We say that $M_1$ and $M_2$ are {\it $P$-equivalent} if one is obtained from the other by a sequence of the operations:
 \begin{itemize}
 \item permutations of rows and/or columns.
  \item ``complementation'' of columns.
  \end{itemize}
  We mean by complementation of a column that every one of its entries change from $0$ to $1$ and from $1$ to $0$.
\end{definition}

\begin{definition}
We say that a $(0,1)$-matrix  $M$ is in {\em Strictly  increasing order by row} (or briefly SIOR)  if its $i$-th row is less than its $j$-th row (as binary numbers) when $i<j$.
\end{definition}


\begin{algorithm} Searching for a distribution of $P$-equivalent classes.

\label{algorithm1}

\vspace{1.5mm}

\noindent{Input: Two positive integers $r$ and $p$.}


\noindent{Output: The $P$-equivalent classes and the size of every class.}

\vspace{2.5mm}

\noindent{$\Omega_{r,p}\leftarrow \emptyset$}




\noindent{For each SIOR matrix $X$ }
  \hspace*{1cm}\begin{enumerate}

   \item[1.]  Check if  $X$  is $P$-equivalent
         to one element of $\Omega_{r,p}$.
             If no, go to 3; otherwise go to 2.

   \item[2.] $\sharp[Y]\leftarrow\sharp[Y]+1$ where $Y$ and $X$ are $P$-equivalent.
      End.

     \item[3.]  $\,\Omega_{r,p}\leftarrow\Omega_{r,p}\cup [X]$ and $\sharp[X]\leftarrow 1$. End.
                  \end{enumerate}

\vspace{2mm}

\noindent{$\Omega_{r,p}=\{[X_1],\ldots,[X_k]\}$ and $\Omega_{r,p}^\sharp=\{\sharp[X_1],  \sharp[X_2], \ldots,  \sharp[X_k]\}$. }
\end{algorithm}

\bigskip

\noindent{\sc Verification:}
By construction, the uniqueness of $[Y]$ in the Step 2 is guaranteed, $\Omega_{r,p}$ is a set of equivalence classes and  $\Omega_{r,p}^\sharp$ gives the size of every orbit.

\bigskip

The following result can be seen as an immediate consequence of the procedure described above.

\begin{proposition}\label{proposicionsuma} Assuming that $\Omega_{r,p}=\{[X_1],\ldots,[X_k]\}$ is a distribution of equivalence classes for the pair $(r,p)$. We have:
 $$\bullet\,\,\displaystyle\sum_{i=1}^n  \sharp[X_i]=\left(\begin{array}{c} 2^p \\ r\end{array}\right),\qquad
\bullet\,\,\sharp[X_i]\leq 2^p p!,\qquad
\bullet\,\,\sharp\Omega_{r,p}\geq \frac{\left(\begin{array}{c} 2^p \\ r\end{array}\right)}{2^p p!}. $$
\end{proposition}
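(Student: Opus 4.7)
The plan is to use the identification of SIOR $(0,1)$-matrices with certain subsets and then a simple orbit-counting argument.

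First, I would observe that an SIOR matrix is determined by its set of rows together with the SIOR ordering. Reading each row as a binary number of length $p$ (so matrices are implicitly $r\times p$), an SIOR matrix corresponds bijectively to a choice of $r$ distinct binary $p$-strings sorted in increasing order, i.e., to an $r$-subset of $\{0,1\}^p$. Hence the total number of SIOR matrices equals $\binom{2^p}{r}$. Since the algorithm visits every SIOR matrix once and places it in exactly one class of $\Omega_{r,p}$ (uniqueness being guaranteed by the verification paragraph), the classes $[X_1],\ldots,[X_k]$ form a partition, giving bullet 1:
\[
\sum_{i=1}^{k}\sharp[X_i]=\binom{2^p}{r}.
\]

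For bullet 2, I would describe the group $G=S_p\times \mathbb{Z}_2^p$ that acts on arbitrary $r\times p$ $(0,1)$-matrices by permuting the $p$ columns and complementing any subset of columns, with $|G|=p!\,2^p$. Given an SIOR matrix $X$, applying any $g\in G$ yields a matrix $X\cdot g$ that need not be SIOR, but re-sorting its rows produces a unique SIOR representative $\widetilde{X\cdot g}$. The key point is that row permutations, which are also allowed by $P$-equivalence, act trivially on the SIOR representative since re-sorting canonicalises any row order. Therefore the $P$-equivalence class $[X]$ of an SIOR matrix $X$ is precisely $\{\widetilde{X\cdot g}:g\in G\}$, and the map $g\mapsto \widetilde{X\cdot g}$ is a surjection from $G$ onto $[X]$. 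This immediately yields $\sharp[X]\leq |G|=2^p\,p!$.

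Bullet 3 is then an immediate pigeonhole consequence of the first two: since the $\sharp\Omega_{r,p}$ classes partition a set of size $\binom{2^p}{r}$ and each has size at most $2^p\,p!$, we obtain
\[
\sharp\Omega_{r,p}\;\geq\;\frac{\binom{2^p}{r}}{2^p\,p!}.
\]

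The only non-routine point is the justification in bullet 2 that row permutations genuinely collapse under the SIOR canonicalisation, so the bound depends only on $|S_p\times \mathbb{Z}_2^p|$ and not on $r!$; everything else is bookkeeping.
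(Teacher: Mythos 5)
Your proposal is correct and follows essentially the same route as the paper: count SIOR matrices as $r$-subsets of $\{0,1\}^p$ to get $\binom{2^p}{r}$, bound each $P$-orbit by the $2^p\,p!$ column operations (with row permutations absorbed by the SIOR canonical form), and conclude by pigeonhole. The paper's own proof merely asserts the orbit bound as "clear," so your explicit surjection from the group of column permutations and complementations onto the class is a welcome elaboration of the same idea (the only cosmetic quibble being that this group is a semidirect, not direct, product, which does not affect its order).
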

\begin{proof}
The first identity follows from the fact that the number of  SIOR matrices $A\in \{0,1\}^{r\times p}$ is $\left(\begin{array}{c} 2^p \\ r\end{array}\right).$
On the other hand, attending to Definition \ref{pequivalence}, it is clear that the size of the $P$-orbit for any $X_i$ is bounded by $2^p p!$.
Finally, the last inequality is a straightforward consequence of the two previous results in this proposition.
\end{proof}

Now we study the connection between the permanent of Sylvester matrices and  $P$-equivalence. Before that, we need some definitions and notations.
 From now on, we write
 $g_{i}$ for the element of ${\bf Z}_2^p$ which is the binary representation of the integer $i-1,$ where $i\in N$. Then
 $M_\alpha$ denotes the SIOR $r\times p$ matrix with entries $\{0,1\}$ where its $l$-th row is $g_{i_l}$. That is,
    $$M_\alpha=\left(\begin{array}{c}
    g_{i_1}\\
    g_{i_2}\\
    \vdots \\
    g_{i_r}
    \end{array}\right)$$ where $\alpha=\{ i_1<i_2<\ldots<i_r\}\in Q_{r,n}$.
\begin{remark}\label{remark1to1}
There is a one to one correspondence between SIOR $r\times p$ matrices $M$ with entries $\{0,1\}$ and subsets $\alpha=\{ i_1<i_2<\ldots<i_r\}\in Q_{r,n}$ where $n=2^p$. Sometimes, we denote $\alpha$ by $\alpha_{\scst M}$.
Let us observe that the $i_k$-row/column of $H_{2^p}$ is indexed by $g_{i_k}$ as a cocyclic matrix (see Example \ref{cocyclesh}).
\end{remark}

\begin{theorem}\label{invariant}
Assuming that $H=[h_{i,j}]$ is a Sylvester Hadamard matrix of order $n=2^p$ and $M \in \{0,1\}^{r\times p}$ is a  SIOR matrix. Let us define
$$\Phi(\alpha_{\scst M})= \prod_{j=2}^n \sum_{i\in {\alpha_{\scst M}}} h_{i,j}.$$
Then,
\begin{enumerate}
   \item $\Phi(\alpha_{\scst M})$ is invariant under rows permutation of $M$.
   \item $\Phi(\alpha_{\scst M})$ is invariant under columns permutation of $M$.
   \item  $\Phi(\alpha_{\scst M})$ is invariant under complementations of columns of $M$.
\end{enumerate}

\end{theorem}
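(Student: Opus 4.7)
The plan is to work entirely in the cocyclic picture: by Example \ref{cocyclesh}, $h_{i,j}=(-1)^{\langle g_i,g_j\rangle}$, where the binary vectors $g_i\in {\bf Z}_2^p$ index the rows and columns of $H$. Under this dictionary, the three operations on $M$ become three algebraic operations on the vectors $g_{i_1},\ldots,g_{i_r}$ that form the rows of $M$, and we only need to verify that $\Phi(\alpha_M)$ is insensitive to each of them.

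For (1), I would simply note that $\alpha_M=\{i_1,\ldots,i_r\}$ is a set, so the value $\sum_{i\in\alpha_M}h_{i,j}$ does not depend on the order in which the rows $g_{i_l}$ are listed; hence neither does the product $\Phi(\alpha_M)$. This step is essentially bookkeeping and uses no cocyclic structure.

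For (2), a permutation $\tau\in S_p$ of the columns of $M$ transforms each row $g_{i_l}$ into $\tau(g_{i_l})$ (the same bits permuted), yielding a new SIOR matrix $M'$ with $\alpha_{M'}=\{\tau(i_l):1\le l\le r\}$. Substituting into $\Phi(\alpha_{M'})$, I would use that the inner product on ${\bf Z}_2^p$ is invariant under a simultaneous permutation of coordinates, i.e.\ $\langle \tau(g_{i_l}),g_j\rangle=\langle g_{i_l},\tau^{-1}(g_j)\rangle$, and then reindex the outer product by $k$ with $g_k=\tau^{-1}(g_j)$. Since $\tau^{-1}$ fixes $0$, the index $k$ still runs over $\{2,\ldots,n\}$, and the product is recovered verbatim as $\Phi(\alpha_M)$.

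For (3), complementing the $c$-th column of $M$ sends each row $g_{i_l}$ to $g_{i_l}+e_c$, so each term in the inner sum acquires a common factor $(-1)^{\langle e_c,g_j\rangle}$ that pulls out. Therefore
\[
\Phi(\alpha_{M'})=\Bigl(\prod_{j=2}^{n}(-1)^{\langle e_c,g_j\rangle}\Bigr)\,\Phi(\alpha_{M})=(-1)^{S}\,\Phi(\alpha_M),
\]
where $S=\sum_{j=2}^n\langle e_c,g_j\rangle$ counts the number of non-zero vectors in ${\bf Z}_2^p$ whose $c$-th coordinate equals $1$. This count is exactly $2^{p-1}$, which is even whenever $p\ge 2$; hence the sign is trivial and $\Phi$ is preserved. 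The only subtle point, and the main thing I would make explicit, is this parity count together with the hypothesis $p\ge 2$ (which is the relevant range, as the theorem is aimed at Sylvester matrices of order $\ge 4$); the other two parts reduce to the symmetry of the bilinear form and a relabelling of indices.
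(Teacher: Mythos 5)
Your proof is correct and follows essentially the same route as the paper's: part (1) is pure bookkeeping since $\alpha_M$ is a set, part (2) rests on the invariance of the inner product on ${\bf Z}_2^p$ under a simultaneous coordinate permutation (which fixes the zero vector, so the outer product over $j=2,\ldots,n$ is merely reindexed), and part (3) extracts the common sign $(-1)^{\langle e_c,g_j\rangle}$ and counts that exactly $2^{p-1}$ of the vectors $g_j$ have a $1$ in the $c$-th coordinate. Your explicit observation that this count is even only for $p\geq 2$ (the statement genuinely fails for $p=1$) is a small point of extra care that the paper leaves implicit.
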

\begin{proof}
Let us observe that,
$$h_{i,j}=\psi(g_i,g_j), \,\,1\leq i,j\leq n\quad \mbox{(where $\psi$ was defined in Example \ref{cocyclesh})}.$$
Given $P$ and $Q$ permutation matrices of order $n$. Then:
\begin{enumerate}
\item
Obviously, any permutation in the rows of $M$ corresponds with the same subset of $r$ distinct rows of  $H_{2^p}$, i.e., $\alpha_{\scst M}=\alpha_{\scst PM}$. Hence, $\Phi(\alpha_{\scst M}) =\Phi(\alpha_{\scst PM})$

\item  A permutation of columns of $M$ may correspond with a different subset of $r$ distinct rows of $H_{2^p}$, i.e., $\alpha_{\scst M}\neq \alpha_{\scst MQ}$ in general. Since $$\psi (g_i,g_j)=\psi(g_iQ,g_jQ),$$ thus  $\Phi(\alpha_{\scst M}) =\Phi(\alpha_{\scst MQ})$.

\item
 Again, a complementation of the $k$-th column of $M$ may correspond with a different subset of $r$ distinct rows of $H_{2^p}$, i.e., $\alpha_{\scst M}\neq \alpha_{\scst MI(k)}$ in general. But
 $$\psi (g_iI(k),g_j)=\left\{\begin{array}{rc}
 \psi( g_i,g_j) & \quad \mbox{if $k$-th entry of $g_j$ is 0} \\
 -\psi(g_i,g_j) & \quad \mbox{if $k$-th entry of $g_j$ is 1,}\end{array}\right.$$
 $I(k)$ denotes a diagonal matrix obtained from
the identity matrix by negating the k-th diagonal entry
 and $g_iI(k)$ means the element of ${\bf Z}_2^p$ where  the $k$-th entry of $g_i$ changes from 0 to 1 or viceversa and the other entries remain the same.
 From the fact that the number of elements of ${\bf Z}_2^p$ with 1 in the $k$-th entry is $2^{p-1}$, it follows that
  $\Phi(\alpha_{\scst M}) =\Phi(\alpha_{\scst MI(k)})$.

\end{enumerate}
\end{proof}
\begin{corollary}\label{corolario1}
Let $M_1, M_2 \in \{0,1\}^{r\times p}$ be  SIOR matrices and $H=[h_{i,j}]$ be the Sylvester Hadamard matrix of order $n=2^p$. If $M_1$ is $P$-equivalent to $M_2$ then
    $$ \Phi(\alpha_{\scst M_1})=\Phi(\alpha_{\scst M_2}).$$
\end{corollary}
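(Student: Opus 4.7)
The plan is to derive the corollary as an immediate consequence of Theorem \ref{invariant}, by noting that $P$-equivalence is generated by exactly the three elementary operations whose invariance has just been established. According to Definition \ref{pequivalence}, saying that $M_1$ is $P$-equivalent to $M_2$ means there exists a finite sequence
\[
M_1 = N_0 \longrightarrow N_1 \longrightarrow \cdots \longrightarrow N_s = M_2
\]
in which each arrow is either a row permutation, a column permutation, or a complementation of a single column. The strategy is to show $\Phi(\alpha_{N_{k-1}}) = \Phi(\alpha_{N_k})$ for each $k$, and then compose.

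For a single step, the three parts of Theorem \ref{invariant} cover the three possibilities: a row permutation $N_k = P N_{k-1}$ is handled by part (1), a column permutation $N_k = N_{k-1} Q$ by part (2), and a column complementation $N_k = N_{k-1} I(j)$ by part (3). In each case the theorem yields $\Phi(\alpha_{N_{k-1}}) = \Phi(\alpha_{N_k})$. A straightforward induction on $s$ then gives $\Phi(\alpha_{M_1}) = \Phi(\alpha_{M_2})$, which is the statement of the corollary.

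The only mild subtlety is that the intermediate matrices $N_k$ need not be SIOR, whereas $\Phi$ was introduced for SIOR inputs via the bijection of Remark \ref{remark1to1}. This is not a real obstacle: first, Theorem \ref{invariant} is already phrased so that $\alpha_{N_k}$ simply denotes the sorted index set associated with the rows of $N_k$, which is well defined regardless of ordering; and second, if one prefers to stay within the SIOR world, one can insert after each column operation a row-sorting step, which by part (1) leaves $\Phi$ unchanged. Either way, the invariance propagates through the sequence and the corollary follows without further calculation.
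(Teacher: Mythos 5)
Your proof is correct and matches the paper's intent exactly: the paper states this corollary without proof, treating it as an immediate consequence of Theorem \ref{invariant}, and your argument (decompose the $P$-equivalence into elementary steps, apply the relevant part of the theorem to each, and induct) is precisely the reasoning being left implicit. Your remark about intermediate matrices not being SIOR is a sensible extra precaution, handled correctly.
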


\begin{proposition}
Let $H=[h_{i,j}]$ be the Sylvester Hadamard matrix of order $n=2^p$. Ryser's formula for $H$ can be rewritten as
\begin{equation}\label{rformulacociclica}
 \sum_{r=1}^n (-1)^rr\sum_{i=1}^{\sharp\Omega_{r,p}}\sharp[X_i^r]\Phi(\alpha_{\scst X_i^r})
 \end{equation}
where $\Omega_{r,p}=\{[X_1^r],\ldots,[X^r_{k_r}]\}$ is the output of Algorithm \ref{algorithm1}.
\end{proposition}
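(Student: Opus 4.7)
My plan is simply to substitute into Ryser's formula (\ref{ryser'sf}) with $A=H_{2^p}$, using two ingredients already developed above: the fact that $H_{2^p}$ has an all-ones first column, and the $P$-equivalence invariance of $\Phi$ recorded in Corollary \ref{corolario1}. No new idea is needed; the work consists of executing the substitution carefully.

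First I would extract the factor coming from $j=1$. Since $g_1$ is the identity of ${\bf Z}_2^p$, Example \ref{cocyclesh} gives $h_{i,1}=\psi(g_i,g_1)=(-1)^{\langle g_i,0\rangle}=1$ for every $i$, so $\sum_{i\in\alpha}h_{i,1}=r$ for every $\alpha\in Q_{r,n}$ of cardinality $r$. This factor is independent of $\alpha$ and factors out of the inner product in (\ref{ryser'sf}), leaving the remaining product over $j=2,\ldots,n$ which is exactly $\Phi(\alpha_{M_\alpha})$ in the notation of Theorem \ref{invariant}. Hence
$$\mbox{per}\,(H_{2^p})=\sum_{r=1}^{n}(-1)^{r}\,r\sum_{\alpha\in Q_{r,n}}\Phi(\alpha_{M_\alpha}).$$

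Next I would collapse the inner sum by grouping its terms along $P$-equivalence classes. Remark \ref{remark1to1} identifies $Q_{r,n}$ with the set of SIOR matrices in $\{0,1\}^{r\times p}$, so Algorithm \ref{algorithm1} partitions $Q_{r,n}$ into the classes $[X_1^r],\ldots,[X_{k_r}^r]$ of sizes $\sharp[X_i^r]$, where $k_r=\sharp\Omega_{r,p}$. By Corollary \ref{corolario1}, $\Phi(\alpha_M)$ is constant on each class, so every $\alpha$ belonging to $[X_i^r]$ contributes the common value $\Phi(\alpha_{X_i^r})$. Therefore
$$\sum_{\alpha\in Q_{r,n}}\Phi(\alpha_{M_\alpha})=\sum_{i=1}^{\sharp\Omega_{r,p}}\sharp[X_i^r]\,\Phi(\alpha_{X_i^r}),$$
and substitution yields the claimed expression (\ref{rformulacociclica}). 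Because all three ingredients (the bijection of Remark \ref{remark1to1}, the correctness of Algorithm \ref{algorithm1} established in its Verification, and the invariance of Corollary \ref{corolario1}) are already in place, there is no real obstacle; the only detail that deserves attention is checking that $(-1)^r$ and $r$ are $\alpha$-independent and may be moved outside the inner sum, which is immediate.
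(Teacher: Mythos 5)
Your proposal is correct and follows essentially the same route as the paper: factor out the contribution $r$ of the all-ones first column to obtain $\sum_{r}(-1)^{r}r\sum_{\alpha\in Q_{r,n}}\Phi(\alpha)$, then collapse the inner sum over $P$-equivalence classes via Corollary \ref{corolario1}. You merely make explicit the first-column computation that the paper treats as immediate.
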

\begin{proof}
By definition,  Ryser's formula for $H$ is
$$\sum_{r=1}^n (-1)^r r \sum_{\alpha\in Q_{r,n}}\Phi(S).$$  Taking into account Corollary \ref{corolario1}, it takes a simple inspection to see that
$$\sum_{\alpha\in Q_{r,n}}\Phi(S)= \sum_{i=1}^{\sharp\Omega_{r,p}}\sharp[X_i^r]\Phi(\alpha_{\scst X_i^r}), \quad \forall r=1,\ldots,n.$$
\end{proof}

\begin{remark}Let $H$ be a normalized Hadamard matrix of order $n$. Then every column/row sum vanishes but the first which is $n$.
\end{remark}
As a consequence,
$$
\begin{array}{lc}
\displaystyle\sum_{i\in \alpha} h_{i,1}= r, &  \\[5mm]
\displaystyle\sum_{i\in {\alpha}} h_{i,j}=-\sum_{i\in \bar{\alpha}} h_{i,j}, & j\geq 2;
\end{array}$$
for all $\alpha\in Q_{r,n}$ where $\bar{\alpha}\in Q_{n-r,n}$ and $\alpha\cup \bar{\alpha}=\{1,2,\ldots,n\}$.
Taking into account the previous identities, the formula (\ref{rformulacociclica}) can be rewritten as
\begin{equation}\label{rformulacociclicas}
\sum_{r=1}^{\frac{n}{2}} (-1)^r(2r-n)\sum_{i=1}^{\sharp\Omega_{r,p}}\sharp[X_i^r]\Phi(\alpha_{\scst X_i^r}).
\end{equation}

\vspace{2mm}

\begin{algorithm}\label{algorithm2} Evaluating the permanent of $H_{2^p}$ by means of Ryser's formula

\vspace{2mm}

\noindent{Input: $H$ be a Sylvester Hadamard matrix of order $n=2^p$.}


\noindent{Output: $per (H)$.
}

\vspace{2mm}
\begin{enumerate}
\item[] Step 1.  Preprocess:  Use Algorithm \ref{algorithm1} to compute $\Omega_{r,p}$ and $\Omega^\sharp_{r,p}$
for $\mbox{$ r= 1,\ldots,2^{p-1}$}$.
\item[] Step 2. Evaluate (\ref{rformulacociclicas}).
\end{enumerate}
\end{algorithm}

\section{Example: $H_8$}
In this section we deal with the problem of computing the permanent of Sylvester Hadamard matrix of order $8$,

$$H_8=\left[\begin{array}{cccccccc}
1 & 1 & 1 & 1 & 1 & 1 & 1 & 1 \\
1 & - & 1 & - & 1 & - & 1 & - \\
1 & 1 & - & - & 1 & 1 & - & - \\
1 & - & - & 1 & 1 & - & - & 1 \\
1 & 1 & 1 & 1 & - & - & - & - \\
1 & - & 1 & - & - & 1 & - & 1 \\
1 & 1 & - & - & - & - & 1 & 1 \\
1 & - & - & 1 & - & 1 & 1 & -
\end{array}\right].$$
The aim of this section is to show the difference in the number of elementary operations  required in Ryser's formula (\ref{ryser'sf}) and in Algorithm \ref{algorithm2} for computing $\mbox{per}\,(H_8)$.

First, a useful lemma.
\begin{lemma}\label{removing}
Let $H$ be a Sylvester Hadamard matrix of order $n=2^p$ (p=1,2,3,4). Let $H_{k,p}$ be a $2^k\times 2^p$ matrix  obtained from $H$ by removing  $2^p-2^k$ of its rows  (where $k=1,\ldots,{p-1}$). Then
for at least one column of $H_{k,p}$, its column sum vanishes.
\end{lemma}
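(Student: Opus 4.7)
The plan is to recast the problem in the cocyclic language of Example \ref{cocyclesh}. Identifying the rows and columns of $H_{2^p}$ with elements $g,g'\in{\bf Z}_2^p$ so that the $(g,g')$ entry equals $(-1)^{\langle g,g'\rangle}$, the $2^k$ retained rows of $H_{k,p}$ correspond to a subset $S\subset{\bf Z}_2^p$ of cardinality $2^k$, and the sum of the entries in column $g'$ becomes the exponential sum
\[
\Sigma(g')=\sum_{s\in S}(-1)^{\langle s,g'\rangle}.
\]
Since $\Sigma(0)=2^k>0$, the task is to produce a nonzero $g'$ with $\Sigma(g')=0$.

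The main tool I would use is Parseval's identity for the Walsh--Hadamard transform on ${\bf Z}_2^p$, which after interchanging the order of summation yields
\[
\sum_{g'\in{\bf Z}_2^p}\Sigma(g')^2 \;=\; 2^p\cdot\sharp S \;=\; 2^{p+k},
\]
so $\sum_{g'\neq 0}\Sigma(g')^2 = 2^{2k}(2^{p-k}-1)$. Since $\Sigma(g')$ is an even integer whenever $k\geq 1$, a nonzero value contributes at least $4$ to this sum. Supposing for contradiction that $\Sigma(g')\neq 0$ for every nonzero $g'$ would force
\[
2^{2k}(2^{p-k}-1)\;\geq\; 4(2^p-1).
\]
A short check shows this inequality fails for every pair $(p,k)$ with $p\in\{2,3,4\}$ and $1\leq k\leq p-1$, with the single exception of the borderline pair $(p,k)=(4,3)$, where the two sides equal $64$ and $60$ respectively.

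The main obstacle is precisely this borderline case. There the squares $\Sigma(g')^2$ lie in $\{0,4,16,36,64\}$, and one must partition $64$ as a sum of $15$ positive terms drawn from $\{4,16,36,64\}$. The minimum such sum is $15\cdot 4=60$; replacing any single term $4$ by the next admissible square $16$ increases the total by $12$, pushing it to at least $72$, so no configuration can land at $64$. This integer obstruction supplies the missing contradiction, and the existence of a nonzero $g'$ with $\Sigma(g')=0$ translates back to a vanishing column sum of $H_{k,p}$.
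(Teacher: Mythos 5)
Your argument is correct. The Parseval identity $\sum_{g'}\Sigma(g')^2=2^{p+k}$ is verified by expanding the square and using orthogonality of characters of ${\bf Z}_2^p$; the parity observation that $\Sigma(g')$ is even (a sum of $2^k$ terms $\pm1$ with $k\geq 1$) gives the lower bound $4$ per nonvanishing column; the inequality $2^{2k}(2^{p-k}-1)\geq 4(2^p-1)$ indeed fails for all relevant pairs except $(p,k)=(4,3)$; and there your integrality argument is sound, since any sum of $15$ values from $\{4,16,36,64\}$ is either exactly $60$ or at least $72$, so it cannot equal $64$. (For $p=1$ the range of $k$ is empty, so nothing needs checking.)

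This is a genuinely different route from the paper, which disposes of the lemma in one line: induction for $p\leq 3$ and inspection (i.e., an exhaustive check of the $\binom{16}{8}$-type cases) for $p=4$. Your approach replaces the case analysis by a uniform second-moment argument that treats an \emph{arbitrary} subset $S$ of $2^k$ rows at once, and it isolates precisely why the statement is easy for most $(p,k)$ and delicate only at $(4,3)$. It also clarifies the paper's open question about extending the lemma to $p>4$: your inequality already holds (so the contradiction is lost) for pairs such as $(5,3)$ and $(5,4)$, and the increment set there is rich enough that the integrality obstruction disappears as well, so a genuinely new idea would be needed beyond $p=4$. That diagnostic value is something the paper's proof does not provide.
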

\begin{proof}
Until $p=3$ the proof is by induction, for $p=4$ it is seen by inspection.
\end{proof}

\begin{corollary} \label{pares}
If $p=1, 2, 3$ or $4$  and $r=2^k$ with $k=1,\ldots,{p-1}$ then $\Phi(\alpha_{\scst X_i^r})=0$ for all $[X_i^r]\in \Omega_{r,p}$.
\end{corollary}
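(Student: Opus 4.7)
The plan is to combine Lemma \ref{removing} with the definition of $\Phi$. Recall that for a SIOR matrix $M \in \{0,1\}^{r\times p}$, the associated index set $\alpha_{\scst M} \in Q_{r,n}$ (with $n = 2^p$) picks out exactly $r$ rows of $H_{2^p}$, and
$$\Phi(\alpha_{\scst M}) = \prod_{j=2}^n \sum_{i\in \alpha_{\scst M}} h_{i,j}$$
is the product of the column sums, over columns $j=2,\ldots,n$, of the $r\times n$ submatrix obtained by keeping only those rows.

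First I would fix $r = 2^k$ with $1\le k \le p-1$ and a representative $X_i^r$ of an arbitrary class $[X_i^r]\in \Omega_{r,p}$. Via Remark \ref{remark1to1} this yields a set $\alpha_{\scst X_i^r} \in Q_{r,n}$ with $|\alpha_{\scst X_i^r}| = 2^k$, and the corresponding row submatrix of $H_{2^p}$ is precisely a matrix of the form $H_{k,p}$ appearing in Lemma \ref{removing} (it is $H_{2^p}$ with $2^p - 2^k$ rows removed). By that lemma, at least one column of $H_{k,p}$ has column sum equal to zero.

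Next I would rule out the possibility that the vanishing column is column $1$. Since $H_{2^p}$ is a Sylvester Hadamard matrix, its first column is the all-ones vector, so the sum over any nonempty subset of rows of column $1$ is $r = 2^k \neq 0$. Hence the zero-sum column guaranteed by Lemma \ref{removing} is some $j_0$ with $2 \le j_0 \le n$. Consequently the factor $\sum_{i\in \alpha_{\scst X_i^r}} h_{i,j_0}$ appearing in the product defining $\Phi(\alpha_{\scst X_i^r})$ is zero, forcing $\Phi(\alpha_{\scst X_i^r}) = 0$.

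Finally, since Corollary \ref{corolario1} ensures that $\Phi(\alpha_{\scst M})$ depends only on the $P$-equivalence class of $M$, the value $0$ obtained above is a property of the whole class $[X_i^r]$, not of the chosen representative, yielding the statement for every $[X_i^r]\in \Omega_{r,p}$. There is no real obstacle: the only thing one has to check is that column $1$ cannot be the offending column, which is immediate from the normalization of $H_{2^p}$; all the combinatorial content sits inside Lemma \ref{removing}.
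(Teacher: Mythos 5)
Your proof is correct and follows exactly the route the paper intends: the corollary is stated as an immediate consequence of Lemma \ref{removing} (the paper gives no separate proof), namely that the guaranteed zero-sum column kills one factor of the product defining $\Phi$. Your extra observation that the vanishing column cannot be column $1$ (whose partial sum is $r=2^k\neq 0$, while the product only runs over $j\geq 2$) is a small but worthwhile check that the paper leaves implicit.
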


Now, we are going to use Algorithm \ref{algorithm2} to compute $\mbox{per}\,(H_8)$.

\begin{enumerate}
\item[] Step 1.  Preprocess:  Use Algorithm \ref{algorithm1} to compute $\Omega_{r,p}$ and $\Omega^\sharp_{r,p}$.
Due to Corollary \ref{pares} is only necessary for $r=1,3$. See Table 1 for the Output.

\begin{center}\begin{table}$$\begin{array}{|c||c|c||} \hline r & \mbox{inequivalent matrices} &\# \mbox{orbits} \\ \hline 1& (0,0,0), \,(0,0,1)  &2, 6\\
\hline 3 &
\left[\begin{array}{ccc} 0 & 0 & 0 \\
0 & 0 & 1 \\
0 & 1 & 0 \\
\end{array}\right],\,
\left[\begin{array}{ccc} 0 & 0 & 0 \\
0 & 0 & 1 \\
1 & 1 & 0 \\
\end{array}\right],\,
\left[\begin{array}{ccc} 0 & 0 & 0 \\
0 & 1 & 1 \\
1 & 0 & 1 \\
\end{array}\right]

& 24,24,8  \\ \hline \end{array}$$\caption{Output of Algorithm 1 for $p=3$ and $r=1,3$.}\end{table}\end{center}

 \item[] Step 2. Evaluate formula \ref{rformulacociclicas}.
$$\mbox{per}\,(H_8)=\,\,(-1)^{1}(2-8)
\sum_{i=1}^{\sharp\Omega_{1,3}}\sharp[X_i^1]\Phi(\alpha_{\scst X_i^1})+(-1)^{3}(6-8)
\sum_{i=1}^{\sharp\Omega_{3,3}}\sharp[X_i^3]\Phi(\alpha_{\scst X_i^3})\,\,$$
(Using Table 1, we get)
$$=\,\,6\cdot(2\cdot 1+6\cdot 1)+2\cdot(24\cdot3 + 24\cdot3 + 8\cdot3 )=384.\qquad\qquad\quad$$

\end{enumerate}
Finally, Table 2  compares the number of elementary operations (additions and multiplications) that it is  needed to compute the permanent of $H_8$ by Ryser's formula (see (\ref{ryser'sf})) and Algorithm \ref{algorithm2}.

\begin{center}\begin{table}$$\begin{array}{|c||c|c||} \hline  & \mbox{Ryser's formula} & \mbox{Algorithm 2} \\ \hline
\# Oper. & 9913  & \begin{array}{r}
 \mbox{Step 1.}\,\,\, 2688\\
 \mbox{Step 2.}\,\,\,\quad  91\\[2mm]
                 2779         \end{array}\\
\hline \end{array}$$\caption{$\#$ of operations that it is needed to compute $\mbox{per}\,(H_8$).}\end{table}\end{center}

\section[]{Conclusions and further work}
In this article, we have rewritten  Ryser's formula for computing the permanent of Sylvester Hadamard matrices attending to their cocyclic properties. The rewritten  formula presents  an important reduction in the number of sets of $r$ distinct rows of $H_{2^p}$ which are involved. However, from a more practical point of view,  Algorithm \ref{algorithm2} is only appropriate for numerical calculations in low orders
 because the method relies on Algorithm \ref{algorithm1}, and this is generally time-consuming.  For low orders, Algorithm \ref{algorithm2} seems to be faster than  Ryser's. Whereas in greater orders, both algorithms  present similar limitations. Hence, analytical formulas for the permanents of Sylvester Hadamard are still far away.

Some improvements of our method could be:
\begin{enumerate}
\item Study the spectrum of the function $\Phi(\alpha)$ and search for a more efficient characterization of the set of all $\alpha\in Q_{r,n}$ with the same value for $\Phi$. This could speed up our Algorithm.

\item Study if for each $p>4$  Lemma \ref{removing}  is true.
\end{enumerate}

Let us remark that for $H_{1,p}$ and $H_{2,p}$ the thesis of Lemma \ref{removing} holds true $\forall\,p\geq 3$.
 It is a straightforward consequence of the orthogonality of $H_{2^p}$. Moreover,
an affirmative answer to the second improvement  implies the following result.

\begin{proposition}
Let $G$ be a $p\times 2^{p-1}$ matrix where its columns are different binary vectors of length $p$. If $G$ is a generator matrix of a binary $(2^{p-1},p)$ code $C$, then there exists at least one codeword of $C$ with weight equal to $2^{p-2}$.
\end{proposition}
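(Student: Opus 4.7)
The approach is to recast the generator matrix $G$ as a row-selection of the Sylvester Hadamard matrix $H_{2^p}$, and then invoke the (conjecturally general) form of Lemma \ref{removing} to extract the required codeword. The key point is that, via the cocyclic formula $h_{i,\ell}=(-1)^{\langle g_i,g_\ell\rangle}$ from Example \ref{cocyclesh}, a vanishing column sum in a row-submatrix of $H_{2^p}$ is literally a balanced splitting of inner products, which is the coding-theoretic condition for a codeword of weight $2^{p-2}$.

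Concretely, I would identify the $2^{p-1}$ distinct columns of $G$ with a subset $S=\{g_{i_1},\ldots,g_{i_{2^{p-1}}}\}\subset {\bf Z}_2^p$, and form $H_{p-1,p}$ by retaining exactly the $2^{p-1}$ rows of $H_{2^p}$ indexed by $S$. Under the hypothesis that Lemma \ref{removing} holds for all $p$, there is some column $j$ of $H_{p-1,p}$ whose column sum vanishes, i.e.
\[
\sum_{g\in S}(-1)^{\langle g,g_j\rangle}=0,
\]
which is equivalent to $\sharp\{g\in S:\langle g,g_j\rangle=1\}=2^{p-2}$. Note that $g_j\neq 0$, since for $g_j=0$ the sum would be $|S|=2^{p-1}$.

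Now set $y=g_j\in {\bf Z}_2^p$ and consider the codeword $c=yG\in C$. Its $\ell$-th coordinate is $\langle y,G_\ell\rangle$, where $G_\ell$ is the $\ell$-th column of $G$. Since $\{G_1,\ldots,G_{2^{p-1}}\}=S$ as a set, the Hamming weight of $c$ equals $\sharp\{g\in S:\langle g,y\rangle=1\}=2^{p-2}$, producing the claimed codeword.

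The only non-routine step is the bridge between the two pictures; once the dictionary ``column sum of $H_{p-1,p}$ $\leftrightarrow$ weight of $yG$'' is in place, the computation is essentially tautological. The full-rank of $G$ implicit in the phrase ``$(2^{p-1},p)$ code'' is not actually used in this argument, since the weight of $yG$ is automatically nonzero for $p\geq 2$. The main obstacle is therefore not in the proof itself but in its hypothesis: it rests entirely on the conjectural extension of Lemma \ref{removing} to $p\geq 5$, which is exactly the point the paper flags as open.
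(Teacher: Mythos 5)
Your proof is correct and takes essentially the same route as the paper's: both form $H_{p-1,p}$ from the rows of $H_{2^p}$ indexed by the columns of $G$, invoke Lemma \ref{removing} (conjecturally, for $p>4$) to obtain a column indexed by some $g_j$ with vanishing sum, and identify $g_jG$ as the weight-$2^{p-2}$ codeword via the dictionary between $\pm 1$ column sums and Hamming weight. Your write-up merely makes explicit the balanced-inner-product computation and the dependence on the open extension of the lemma, both of which the paper leaves implicit or states in the surrounding text.
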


\begin{proof}
Let $H_{p-1,p}$ be the $2^{p-1}\times 2^{p}$ matrix   obtained from $H_{2^p}=\left[\psi(g_i,g_j)\right]_{g_i,g_j\in{\bf Z}_2^p}$ by removing  $2^{p-1}$ of its rows. Concretely, those rows index by the elements of ${\bf Z}_2^p$ different from the columns of $G$.
Let $g_i$ be the element of ${\bf Z}_2^p$ indexing the column of $H_{p-1,p}$ such that  its column sum vanishes. Then,
$g_iG$ is a codeword of $C$ with  weight equal to $2^{p-2}$. Since swapping the  $0$ entries of $(g_iG)^T$  to  $-1$, it becomes  the $i$-th column of $H_{p-1,p}$.
\end{proof}

 \subsection{Acknowledgements}
  I thank V\'ictor \'Alvarez for numerous interesting discussions and for his assistance with the computational aspects
of the paper. I thank  Kristeen Cheng for her reading of the manuscript.

This work has been partially supported by the research projects FQM-016 and P07-FQM-02980 from JJAA and MTM2008-06578 from MIC\-INN (Spain) and FEDER (European Union).



\begin{thebibliography}{1}













\bibitem{Arm10}

J.A. Armario.
\newblock On an inequivalence criterion for cocyclic Hadamard matrices.
\newblock{\em Cryptogr. Commun.}, 2:247--259, 2010.



\bibitem{Bru66}

R.A. Brualdi.
\newblock Permanent of the direct product of matrices.
\newblock {\em Pacific Journal of Mathematics}, 16: 471--482, 1966.






\bibitem{DP88}
J.~Day and B.~Peterson.
\newblock  Growth in Gaussian elimination.
\newblock  {\em Amer. Math. Monthly}, 95:489--513, 1988.















\bibitem{HD94}
K.J. Horadam and   W. de Launey.
 \newblock Cocyclic development of designs.
 \newblock{\em J. Algebraic Combin.},  2:267--290, 1993; Erratum: {\em J. Algebraic Combin.},  3:129, 1994.



\bibitem{Hor07}
K.J. Horadam.
\newblock  {\em Hadamard Matrices and Their Applications}.
\newblock  Princeton University Press, Princeton, NJ, 2007.






\bibitem{McL95}
S. MacLane.
\newblock {\em Homology}.
\newblock Classics in Mathematics Springer-Verlang, Berlin, 1995, Reprint of the 1975 edition.












\bibitem{Min78}
H. Minc.
\newblock Permanents.
\newblock {\em Encyclopedia of Mathematics and its Applications 6}. (Reading, Mass.: Addison-Wesley), 1978.

\bibitem{YH82}
R. Yarlagadda and J. Hershey.
\newblock A note on the eigenvectors of Hadamard matrices of order $2^n$.
\newblock{ Linear Algebra Appl.}, 45: 43--53, 1982.

\bibitem{Val79}
L.G. Valiant.
\newblock The complexity of computing the permanent.
\newblock{ Theoret. Comput. Sci.}, 8:  189-201, 1979.

\bibitem{Wan74}
E. T. H. Wang.
\newblock  On permanents of $(-1,1)$-matrices.
\newblock{\em Israel J. Math.},  18:353--361, 1974.

\bibitem{Wan05}
I. M. Wanless.
\newblock  Permanents of matrices of signed ones.
\newblock{\em Linear and Multilinear Algebra},  52:57--63, 2005.


\bibitem{Wan07}
I. M. Wanless.
\newblock{\em Permanents.}
\newblock Chapter 31 in Handbook of Linear Algebra (ed. L. Hogben), Chapman $\&$ Hall/CRC, 2007.




\end{thebibliography}
\end{document}